\newcommand{\cD}{{\mathcal D}}
\newcommand{\bN}{{\mathbb N}}
\newcommand{\bZ}{{\mathbb Z}}
\newcommand{\bQ}{{\mathbb Q}}
\newcommand{\bR}{{\mathbb R}}
\newcommand{\bC}{{\mathbb C}}
\newcommand{\be}{\begin{equation}}
\newcommand{\ee}{\end{equation}}
\newcommand{\ba}{\begin{eqnarray}}
\newcommand{\ea}{\end{eqnarray}}
\newcommand{\ban}{\begin{eqnarray*}}
\newcommand{\ean}{\end{eqnarray*}}
\newtheorem{theorem}{Theorem}[section]
\theoremstyle{definition}
\newtheorem{definition}[theorem]{Definition}
\newtheorem{example}[theorem]{Example}
\theoremstyle{remark}
\theoremstyle{proposition}
\newtheorem{proposition}[theorem]{Proposition}
\theoremstyle{proposition}
\newtheorem{corollary}[theorem]{Corollary}
\numberwithin{equation}{section}
\begin{document}

\title{$p$-Adic refinable functions and MRA-based wavelets}

\author{A.~Yu.~Khrennikov}
%    Address of record for the research reported here
\address{International Center for Mathematical Modelling in Physics and
Cognitive Sciences MSI, V\"axj\"o University, \ SE-351 95,
V\"axj\"o, \ Sweden.}
%    Current address
%\curraddr{Department of Mathematics and Statistics,
%Case Western Reserve University, Cleveland, Ohio 43403}
\email{andrei.khrennikov@msi.vxu.se}
%    \thanks will become a 1st page footnote.
\thanks{This paper was supported in part by the grant
of The Swedish Royal Academy of Sciences on collaboration with
scientists of former Soviet Union.
The second author (V.~S.) was supported in part by
DFG Project 436 RUS 113/809 and Grant 05-01-04002-NNIOa of
RFBR.
The third author (M.S.) was supported in part by
Grant 06-01-00457 of RFBR}

%    Information for second author
\author{V.~M.~Shelkovich}
\address{Department of Mathematics, St.-Petersburg State Architecture
and Civil Engineering University, \ 2 Krasnoarmeiskaya 4, 190005,
St. Petersburg, \ Russia.}
\email{shelkv@vs1567.spb.edu}
%%\thanks{}

\author{M.~Skopina}
\address{Department of Applied Mathematics and Control Processes,
St. Petersburg State University, \ Universitetskii pr.-35,
198504 St. Petersburg, Russia.}
\email{skopina@MS1167.spb.edu}
%%\thanks{}

%    General info
\subjclass[2000]{Primary 11S80, 42C40; Secondary 11E95}

%\date{January 1, 2001 and, in revised form, June 22, 2001.}

%\dedicatory{This paper is dedicated to our advisors.}

\keywords{$p$-adic multiresolution analysis; refinable equations,
wavelets}

\begin{abstract}
We described a wide class of $p$-adic refinable
equations generating $p$-adic multiresolution analysis. A method
for the construction of $p$-adic orthogonal wavelet bases within
the framework of the MRA theory is suggested. A realization of this
method is illustrated by an example, which gives a new 3-adic wavelet
basis. Another realization leads to the $p$-adic Haar bases which were
known before.
\end{abstract}

\maketitle

\section{Introduction}
\label{s1}

A sensation happened in the early nineties because
a general scheme for the construction of wavelets was developed. This
scheme is based on the notion of multiresolution analysis (MRA in the sequel)
introduced by Y.~Meyer and S.~Mallat. Immediately specialists started to
implement new wavelet systems.
Nowadays it is difficult to find an engineering area where wavelets are
not applied. In the $p$-adic setting,  situation is the following.
In 2002 S.~V.~Kozyrev~\cite{Koz0} found a compactly supported
$p$-adic wavelet basis for $L^2(\bQ_p)$ which is an analog of the Haar basis.
Another $p$-adic wavelet-type system generalizing Kozyrev's basis was constructed
in~\cite{Kh-Sh1},~\cite{Kh-Sh2}.

It turned out that the mentioned above $p$-adic wavelets are eigenfunctions of
$p$-adic pseudo-differential operators~\cite{Al-Kh-Sh3},~\cite{Al-Kh-Sh5},
~\cite{Kh-Koz2}--~\cite{Kh-Sh2},~\cite{Koz0},~\cite{Koz2}.
This fact implies that study of wavelets is important and gives a new powerful
technique for solving $p$-adic problems (areas of applications can be found
in~\cite{Kh2},~\cite{Koch3},~\cite{Vl-V-Z}).

Nevertheless, in the cited papers, {\em a theory describing common
properties of $p$-adic wavelet bases and giving general methods for their
construction was not developed}. Moreover, it was assumed in~\cite{Ben-Ben}
that there are obstacles for creating a $p$-adic analog of the classical MRA theory.
To construct a $p$-adic analog of a classical MRA we need a proper
$p$-adic {\em refinement equation}. In~\cite{Kh-Sh1} the following conjecture
was proposed: the equality
\begin{equation}
\label{62.0-3}
\phi(x)=\sum_{r=0}^{p-1}\phi\Big(\frac{1}{p}x-\frac{r}{p}\Big),
\quad x\in \bQ_p,
\end{equation}
may be considered as a {\it refinement equation}. A solution $\phi$ to this
equation ({\it a refinable function}) is the characteristic function
$\Omega\big(|x|_p\big)$ of the unit disc.
The equation (\ref{62.0-3}) reflects a {\it natural} ``self-similarity'' of
the space $\bQ_p$: the unit disc $B_{0}(0)=\{x: |x|_p \le 1\}$ is represented
as a sum of $p$ mutually {\it disjoint} discs
$B_{0}(0)=B_{-1}(0)\cup\Big(\cup_{r=1}^{p-1}B_{-1}(r)\Big)$,
where $B_{-1}(r)=\bigl\{x: |x-r|_p \le p^{-1}\bigr\}$
(see~\cite[I.3,Examples 1,2.]{Vl-V-Z}).
The equation  (\ref{62.0-3}) is an analog of the {\em refinement equation}
generating the Haar MRA in the real analysis.
Using this idea, the notion of $p$-adic MRA was introduced and a general
scheme for its construction was described in~\cite{S-Sk-1}.
In~\cite{S-Sk-1}, this scheme was realized for construction $2$-adic Haar MRA
with using (\ref{62.0-3}) as the generating refinement equation.
In contrast to the real setting, the {\it refinable function} $\phi$ generating
the Haar MRA is {\em periodic}, which {\em never holds} for real
refinable functions. Doe to this fact, there exist {\em infinity many different}
orthonormal wavelet bases in the same Haar MRA. One of them coincides with Kozyrev's
wavelet basis.
The present paper is devoted to study of $p$-adic {\it refinement equations}
generating MRAs and a method for the construction of MRA-based wavelets.

Here and in what follows, we shall use the notations and the results from~\cite{Vl-V-Z}.
Let $\bN$, $\bZ$, $\bC$ be the sets of positive integers, integers,
complex numbers, respectively.
The field $\bQ_p$ of $p$-adic numbers is defined as the completion
of the field of rational numbers $\bQ$ with respect to the
non-Archimedean $p$-adic norm $|\cdot|_p$. This $p$-adic norm
is defined as follows: $|0|_p=0$; if $x\ne 0$, $x=p^{\gamma}\frac{m}{n}$,
where $\gamma=\gamma(x)\in \bZ$
and the integers $m$, $n$ are not divisible by $p$, then
$|x|_p=p^{-\gamma}$.
The norm $|\cdot|_p$ satisfies the strong triangle inequality
$|x+y|_p\le \max(|x|_p,|y|_p)$.
The canonical form of any $p$-adic number $x\ne 0$ is
\begin{equation}
\label{2}
x=p^{\gamma}(x_0 + x_1p + x_2p^2 + \cdots),
\end{equation}
where $\gamma=\gamma(x)\in \bZ$, \ $x_j=0,1,\dots,p-1$, $x_0\ne 0$,
$j=0,1,\dots$. We shall write the $p$-adic numbers
$k=k_{0}+k_{1}p+\cdots+k_{s-1}p^{s-1}$, $k_0,\dots,k_{s-1}=0,1,2,\dots,p-1$,
in the usual (for the real analysis) form: $k=0,1,\dots,p^s-1$.
Denote by $B_{\gamma}(a)=\{x\in \bQ_p: |x-a|_p \le p^{\gamma}\}$
the disc of radius $p^{\gamma}$ with the center at a point $a\in \bQ_p$,
$\gamma \in \bZ$.
Any two balls in $\bQ_p$ either are disjoint or one
contains the other.

There exists the Haar measure $dx$ on $\bQ_p$,which is  positive,
invariant under the shifts, i.e., $d(x+a)=dx$, and normalized by
$\int_{|\xi|_p\le 1}\,dx=1$.
A complex-valued function $f$ defined on $\bQ_p$ is called
{\it locally-constant} if for any $x\in \bQ_p$ there exists
an integer $l(x)\in \bZ$ such that
$f(x+y)=f(x)$, $y\in B_{l(x)}(0)$.
Denote by ${\cD}(\bQ_p)$ the linear spaces of locally-constant compactly
supported functions (so-called test functions)~\cite[VI.1.,2.]{Vl-V-Z}.
The Fourier transform of $\varphi\in {\cD}(\bQ_p)$ is defined by
${\widehat\phi}(\xi)=F[\varphi](\xi)=\int_{\bQ_p}\chi_p(\xi\cdot x)\varphi(x)\,dx$,
$\xi \in \bQ_p$,
where $\chi_p(\xi\cdot x)=e^{2\pi i\{\xi x\}_p}$ is the additive character
the field $\bQ_p$, $\{\cdot\}_p$ is a fractional part of a number $x\in \bQ_p$.
 The Fourier transform is extended to $L^2(\bQ_p)$ in a
standard way. If $f\in L^2(\bQ_p)$, $0\ne a\in \bQ_p$, \ $b\in \bQ_p$,
then~\cite[VII,(3.3)]{Vl-V-Z}:
\begin{equation}
\label{14}
F[f(ax+b)](\xi)
=|a|_p^{-1}\chi_p\Big(-\frac{b}{a}\xi\Big)F[f(x)]\Big(\frac{\xi}{a}\Big).
\end{equation}
According to~\cite[IV,(3.1)]{Vl-V-Z},
\begin{equation}
\label{14.1}
F[\Omega(p^{-k}|\cdot|_p)](x)=p^{k}\Omega(p^k|x|_p), \quad k\in \bZ,
\quad x \in \bQ_p,
\end{equation}
where $\Omega(t)=1$ for $t\in [0,\,1]$; $\Omega(t)=0$ for $t\not\in [0,\,1]$.

\section{Multiresolution analysis}
\label{s2}

Let
$I_p=\{a=p^{-\gamma}\big(a_{0}+a_{1}p+\cdots+a_{\gamma-1}p^{\gamma-1}\big):
\gamma\in \bN; a_j=0,1,\dots,p-1; j=0,1,\dots,\gamma-1\}$.
It is well known that
$\bQ_p=B_{0}(0)\cup\cup_{\gamma=1}^{\infty}S_{\gamma}$, where
$S_{\gamma}=\{x\in \bQ_p: |x|_p = p^{\gamma}\}$. Due to
(\ref{2}), $x\in S_{\gamma}$, $\gamma\ge 1$, if and only if
$x=x_{-\gamma}p^{-\gamma}+x_{-\gamma+1}p^{-\gamma+1}+\cdots+x_{-1}p^{-1}+\xi$,
where $x_{-\gamma}\ne 0$, $\xi \in B_{0}(0)$. Since
$x_{-\gamma}p^{-\gamma}+x_{-\gamma+1}p^{-\gamma+1}
+\cdots+x_{-1}p^{-1}\in I_p$, we have a ``natural'' decomposition of
$\bQ_p$ to a union of mutually  disjoint discs:
$\bQ_p=\cup_{a\in I_p}B_{0}(a)$.
So, $I_p$ is a {\em ``natural'' set of shifts} for $\bQ_p$.

\begin{definition}
\label{de1} \rm
(~\cite{S-Sk-1})
A collection of closed spaces
$V_j\subset L^2(\bQ_p)$, $j\in\bZ$, is called a
{\it multiresolution analysis {\rm(}MRA{\rm)} in $ L^2(\bQ_p)$} if the
following axioms hold

(a) $V_j\subset V_{j+1}$ for all $j\in\bZ$;

(b) $\bigcup_{j\in\bZ}V_j$ is dense in $ L^2(\bQ_p)$;

(c) $\bigcap_{j\in\bZ}V_j=\{0\}$;

(d) $f(\cdot)\in V_j \Longleftrightarrow f(p^{-1}\cdot)\in V_{j+1}$
for all $j\in\bZ$;

(e) there exists a function $\phi \in V_0$
such that the system $\{\phi(\cdot-a), a\in I_p\}$ is an orthonormal
basis for $V_0$.
\end{definition}

The function $\phi$ from axiom (e) is called {\em scaling}.
It follows  immediately
from axioms (d) and (e) that the functions $p^{j/2}\phi(p^{-j}\cdot-a)$,
$a\in I_p$, form an orthonormal basis for $V_j$, $j\in\bZ$.
According to the standard scheme (see, e.g.,~\cite[\S 1.3]{NPS})
for construction of MRA-based wavelets, for each $j$, we define
a space $W_j$ ({\em wavelet space}) as the orthogonal complement
of $V_j$ in $V_{j+1}$, i.e., $V_{j+1}=V_j\oplus W_j$, $j\in \bZ$,
where $W_j\perp V_j$, $j\in \bZ$. It is not difficult to see that
\begin{equation}
\label{61.0}
f\in W_j \Longleftrightarrow f(p^{-1}\cdot)\in W_{j+1},
\quad\text{for all}\quad j\in \bZ
\end{equation}
and $W_j\perp W_k$, $j\ne k$.
Taking into account axioms (b) and (c), we obtain
\begin{equation}
\label{61.1}
{\bigoplus\limits_{j\in\bZ}W_j}= L^2(\bQ_p)
\quad \text{(orthogonal direct sum)}.
\end{equation}

If now we find  a finite number of functions $\psi^{(\nu)} \in W_0, \nu\in A,$
such that the system $\{\psi^{(\nu)}(x-a), a\in~I_p, \nu\in A\}$ is an orthonormal
basis for $W_0$, then, due to~(\ref{61.0}) and (\ref{61.1}),
the system $\{p^{j/2}\psi^{(\nu)}(p^{-j}\cdot-a), a\in I_p, j\in\bZ,\nu\in A\}$,
is an orthonormal basis for $ L^2(\bQ_p)$.
Such  functions $\psi^{(\nu)}$ are called  {\em wavelet functions} and
the basis is a {\em wavelet basis}.

Let $\phi$ be a {\em scaling function} for a MRA. As was mentioned
above, the system $\{p^{1/2}\phi(p^{-1}\cdot-a), a\in I_p\}$ is a
basis for $V_1$. It follows from axiom (a) that
\begin{equation}
\label{62.0-2*} \phi=\sum_{a\in I_p}\beta_a\phi(p^{-1}\cdot-a),
\quad \beta_a\in \bC.
\end{equation}
We see that the function
$\phi$ is a solution of a special kind of functional equation.
Such equations are called {\em refinement equations}, and their
solutions are called  {\em refinable functions} \footnote{Usually
the terms ``scaling function'' and ``refinable function'' are
synonyms in the literature, and they are used for both the
senses: as a solution to a refinement equation and as a function
generating MRA. We separate the meanings of these terms.}.

A natural way for the construction of a MRA (see, e.g.,~\cite[\S 1.2]{NPS})
is the following. We start with an appropriate function $\phi$
whose $I_p$-shifts form an orthonormal system and set
\begin{equation}
\label{62.0-11}
V_j=\overline{{\rm span}\big\{\phi\big(p^{-j}\cdot-a\big):a\in I_p\big\}},
\quad j\in \bZ.
\end{equation}
It is clear that axioms (d) and (e) of Definition~\ref{de1} are fulfilled.
Of course, not any such a function $\phi$ provides axiom $(a)$.
In the {\em real setting}, the relation $V_0\subset V_{1}$ holds
if and only if the scaling function satisfies a refinement equation.
Situation is different in $p$-adics. Generally speaking, a refinement
equation (\ref{62.0-2*}) {\em does not imply the including property}
$V_0\subset V_{1}$ because the set of the shifts $I_p$ {\em does not
form a group}.
Indeed, we need all the functions $\phi(\cdot-b)$,
$b\in I_p$, to belong to the space $V_1$, i.e., the identities
$\phi(x-b)=\sum_{a\in I_p}\alpha_{a,b}\phi(p^{-1}x-a)$ should be
fulfilled for all $b\in I_p$. Since $p^{-1}b+a$ is not in $I_p$ in general,
we {\em can not state} that
$\phi(x-b)=\sum_{a\in I_p}\alpha_{a,b}\phi(p^{-1}x-p^{-1}b-a)\in V_1$
for all $b\in I_p$.
Nevertheless, some refinement equations
imply including property, which may happen because of different causes.

The {\it refinement equation} (\ref{62.0-3}) is a particular case of
(\ref{62.0-2*}).

\section{Construction of refinable functions}
\label{s3}

Now we are going to study $p$-adic refinement equations  and
 their solutions. We restrict ourselves by the refinement
equations (\ref{62.0-2*}) with a finite number of the terms in the right-hand side:
\begin{equation}
\label{62.0-5}
\phi(x)=\sum_{k=0}^{p^s-1}\beta_{k}\phi\Big(\frac{1}{p}x-\frac{k}{p^s}\Big).
\end{equation}
If $\phi\in  L^2(\bQ_p)$, taking the Fourier transform and using
(\ref{14}), one can rewrite (\ref{62.0-5}) as
\begin{equation}
\label{62.0-6}
{\widehat\phi}(\xi)=m_0\Big(\frac{\xi}{p^{s-1}}\Big){\widehat\phi}(p\xi),
\end{equation}
where
\begin{equation}
\label{62.0-7-1}
m_0(\xi)=\frac{1}{p}\sum_{k=0}^{p^s-1}\beta_{k}\chi_p(k\xi)
\end{equation}
is a trigonometric polynomial. It is clear that $m_0(0)=1$ whenever $\widehat\phi(0)\ne0$.

\begin{theorem}
\label{th1-1*}
If $\phi$ is a refinable function such that ${\rm supp\,{\widehat\phi}}\subset B_0(0)$
and the system $\{\phi(x-a):a\in I_p\}$ is orthonormal, then axiom $(a)$ from
Definition~{\rm\ref{de1}} holds for the spaces {\rm(\ref{62.0-11})}.
\end{theorem}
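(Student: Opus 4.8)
The plan is to reduce everything to the single inclusion $V_0\subset V_1$ and then verify it on generators. Since the spaces (\ref{62.0-11}) satisfy axiom (d) by construction (equivalently, $f\in V_j\Leftrightarrow f(p^{-1}\cdot)\in V_{j+1}$), the whole chain $V_j\subset V_{j+1}$ follows from $V_0\subset V_1$ by applying the dilation $f\mapsto f(p^{-j}\cdot)$. As $V_0$ is the closed span of $\{\phi(\cdot-b):b\in I_p\}$, it suffices to show that each $\phi(\cdot-b)$, $b\in I_p$, lies in $V_1$. The orthonormality hypothesis is not actually needed for this step; it is carried along only because it secures axioms (d) and (e) and is the standing assumption of the MRA construction.

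The crucial consequence of the support hypothesis is that $\phi$ is invariant under translations by the unit disc, i.e. $\phi(x+h)=\phi(x)$ for every $h\in B_0(0)$. I would establish this by Fourier analysis: by (\ref{14}) one has $F[\phi(\cdot-h)](\xi)=\chi_p(h\xi)\widehat\phi(\xi)$, and for $h\in B_0(0)$ and $\xi\in\mathrm{supp}\,\widehat\phi\subset B_0(0)$ the product $h\xi$ is a $p$-adic integer, so $\{h\xi\}_p=0$ and $\chi_p(h\xi)=1$. Hence $F[\phi(\cdot-h)]=\widehat\phi$ on all of $\bQ_p$, which gives $\phi(\cdot-h)=\phi$. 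This is exactly the $p$-adic periodicity phenomenon highlighted in the introduction, and it is the one step where the hypothesis $\mathrm{supp}\,\widehat\phi\subset B_0(0)$ does all the work; everything else is bookkeeping.

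With periodicity in hand, I would apply the refinement equation (\ref{62.0-5}) to the translated argument $x-b$:
\be
\phi(x-b)=\sum_{k=0}^{p^s-1}\beta_k\,\phi\Big(\frac{1}{p}x-\frac{b}{p}-\frac{k}{p^s}\Big).
\ee
Although the shift $c_k=\frac{b}{p}+\frac{k}{p^s}$ need not lie in $I_p$ (this is precisely the failure of $I_p$ to be a group that was flagged above), the ``natural'' decomposition $\bQ_p=\cup_{a\in I_p}B_0(a)$ lets me write $c_k=a_k+h_k$ with $a_k\in I_p$ (allowing $a_k=0$) and $h_k\in B_0(0)$, by collecting the negative-power digits of $c_k$ into $a_k$ and the non-negative ones into $h_k$. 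By the periodicity just proved, $\phi\big(\frac{1}{p}x-a_k-h_k\big)=\phi\big(\frac{1}{p}x-a_k\big)$, so each summand is one of the generators of $V_1$.

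Therefore $\phi(x-b)=\sum_k\beta_k\,\phi\big(\frac{1}{p}x-a_k\big)$ is a \emph{finite} linear combination of $\{\phi(p^{-1}\cdot-a):a\in I_p\}$ and hence lies in $V_1$. Letting $b$ range over $I_p$ yields $V_0\subset V_1$, and the dilation argument of the first paragraph promotes this to axiom (a) for all $j$. I expect the only genuinely delicate point to be the periodicity claim of the second paragraph; once it is available, the argument is a direct substitution into the refinement equation followed by the digit decomposition of $p$-adic numbers.
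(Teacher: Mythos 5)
Your proposal is correct and follows essentially the same route as the paper: the paper likewise derives a periodicity of $\phi$ from ${\rm supp\,}\widehat\phi\subset B_0(0)$ (it proves $\phi(x+1)=\phi(x)$ and notes that each shift $\frac{b}{p}+\frac{k}{p^s}$ lies in $I_p$ or in $I_p+1$), then substitutes into the refinement equation to get $V_0\subset V_1$ and dilates. Your only variation is proving the slightly stronger invariance of $\phi$ under all translations by $B_0(0)$ and using the general digit decomposition $c_k=a_k+h_k$, which is the same Fourier argument with marginally cleaner bookkeeping.
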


\begin{proof}
Since $\chi_p(-\xi)=1$ for $\xi\in B_0(0)$, we have
$\chi_p(-\xi){\widehat\phi}(\xi)={\widehat\phi}(\xi)$. Applying
the Fourier transform, we obtain $\phi(x+1)=\phi(x)$. Thus $\phi$
is a {\em $1$-periodic} function.
Since for all $a\in I_p$ and all $k=0,1,\dots,p^s-1$,
either $\frac{a}{p}+\frac{k}{p^s}\in I_p$, or
$\frac{a}{p}+\frac{k}{p^s}-1\in I_p$. Due to the {\em $1$-periodicity} of
$\phi$, it follows from (\ref{62.0-5}) that $\phi(x-b)\in V_1$ for all
$b\in I_p$. This implies $V_0\subset V_{1}$, similarly
$V_j\subset V_{j+1}$ for any $j\in \bZ$.
\end{proof}

\begin{theorem}
Let $\phi\in L^2(\bQ_p)$ be a refinable function, the system $\{\phi(x-a):a\in I_p\}$
be orthonormal and ${\rm supp\,{\widehat\phi}}\subset B_0(0)$.
Axiom $(b)$ of Definition~{\rm\ref{de1}} holds for the spaces
{\rm(\ref{62.0-11})} {\rm(}i.e., $\overline{\cup_{j\in\bZ}V_j}= L^2(\bQ_p)${\rm)}
if and only if
\be
\bigcup\limits_{j\in\bZ}{\rm supp\,}\widehat\phi(p^{j}\cdot)=\bQ_p.
\label{dnn14}
\ee
\label{th1-3*}
\end{theorem}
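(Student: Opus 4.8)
The plan is to move everything to the Fourier side, where the unitarity of $F$ turns axiom $(b)$ into a statement about the supports of Fourier transforms. Since $F$ is unitary on $L^2(\bQ_p)$, the closure $\overline{\bigcup_j V_j}$ equals $L^2(\bQ_p)$ if and only if the only $f\in L^2(\bQ_p)$ orthogonal to every $V_j$ is $f=0$. First I would record, using (\ref{14}), that
\[
F\big[\phi(p^{-j}\cdot-a)\big](\xi)=p^{-j}\chi_p\big(ap^{j}\xi\big)\,\widehat\phi(p^{j}\xi),\qquad a\in I_p,
\]
so that every function in $F(V_j)$ is supported in the set $E_j:={\rm supp\,}\widehat\phi(p^{j}\cdot)=p^{-j}{\rm supp\,}\widehat\phi$, which by hypothesis lies in $B_j(0)$.

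The heart of the argument is to describe orthogonality to a single space $V_j$. Fixing $j$, the relations $f\perp\phi(p^{-j}\cdot-a)$ for all $a\in I_p$ become, after Parseval,
\[
\int_{B_j(0)}\chi_p\big(-ap^{j}\xi\big)\,\overline{\widehat\phi(p^{j}\xi)}\,\widehat f(\xi)\,d\xi=0,\qquad a\in I_p.
\]
Here I would invoke $p$-adic harmonic analysis on the compact disc $B_j(0)$: as $a$ runs through $I_p$ the elements $ap^{j}$ run through a complete set of representatives of the characters of $B_j(0)$ (equivalently of $\bQ_p/B_{-j}(0)$), so the system $\{\chi_p(ap^{j}\cdot):a\in I_p\}$ is complete and orthogonal in $L^2(B_j(0))$. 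Vanishing of all these integrals therefore forces the product $\overline{\widehat\phi(p^{j}\cdot)}\,\widehat f$ to be zero a.e. on $B_j(0)$, i.e. $\widehat f=0$ a.e. on $E_j$. I expect this completeness step to be the crux: it is exactly what compensates for the failure of $I_p$ to be a group and collapses shift-orthogonality into a clean pointwise support condition.

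Intersecting over $j$, I obtain that $f\perp\bigcup_j V_j$ if and only if $\widehat f=0$ a.e. on $\bigcup_j E_j$. Consequently $\overline{\bigcup_j V_j}=L^2(\bQ_p)$ precisely when no nonzero element of $L^2(\bQ_p)$ can have its Fourier transform supported in the complement of $\bigcup_j E_j$, that is, precisely when this complement is a null set. To match this with the literal equality (\ref{dnn14}), I would finally note that the $E_j$ are clopen and, by the inclusions $V_j\subset V_{j+1}$ supplied by Theorem~\ref{th1-1*}, increasing; for the increasing family of discs arising here, ``full measure'' and ``$\bigcup_j E_j=\bQ_p$'' coincide, so the null-complement condition is the same as (\ref{dnn14}). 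This establishes both implications at once. The one point demanding a little care is this last identification of an almost-everywhere cover with a genuine set-theoretic cover, which is legitimate precisely because the supports in play are clopen discs.
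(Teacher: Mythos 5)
Your argument is correct, but it follows a genuinely different route from the paper's. The paper first shows that $\overline{\cup_{j}V_j}$ is invariant under \emph{all} translations of $\bQ_p$ (approximating an arbitrary shift $t$ by shifts $p^{j}a$, $a\in I_p$, under which $V_j$ is invariant, and using continuity of translation in $L^2$), and then invokes a $p$-adic version of the Wiener theorem for $L^2$ --- every closed translation-invariant subspace is the inverse Fourier image of $L_2(\Omega)$ for some measurable $\Omega$ --- finally identifying $\Omega$ with $\cup_{j}{\rm supp\,}\widehat\phi(p^{j}\cdot)$ up to null sets. You bypass the Wiener theorem entirely: since ${\rm supp\,}\widehat\phi(p^{j}\cdot)\subset B_j(0)$ and $\{ap^{j}:a\in I_p\}$ is a complete set of representatives of the dual of the compact group $B_j(0)$, completeness of characters lets you compute the orthogonal complement of each $V_j$ directly as $\{f:\widehat f=0$ a.e.\ on ${\rm supp\,}\widehat\phi(p^{j}\cdot)\}$, and axiom $(b)$ becomes the statement that the complement of the union of these supports is null. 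Your approach is more elementary and self-contained, and it uses the hypothesis ${\rm supp\,}\widehat\phi\subset B_0(0)$ in an essential way (it is exactly what makes the compact-group Fourier analysis on $B_j(0)$ available), whereas the paper's invariance-plus-Wiener argument does not need that hypothesis for its structural part. Both proofs end by equating ``the complement of $\cup_{j}{\rm supp\,}\widehat\phi(p^{j}\cdot)$ is null'' with the literal set equality (\ref{dnn14}); the paper passes over this silently, while you at least flag it. Your justification (``the $E_j$ are clopen'') does not follow immediately from the stated hypotheses alone, but it can be repaired: orthonormality of the shifts together with completeness of $\{\chi_p(a\cdot):a\in I_p\}$ in $L^2(B_0(0))$ forces $|\widehat\phi|=1$ a.e.\ on $B_0(0)$, so ${\rm supp\,}\widehat\phi=B_0(0)$ and each $E_j$ is the clopen disc $B_j(0)$ --- which incidentally shows that under the theorem's hypotheses condition (\ref{dnn14}) holds automatically.
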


\begin{proof}
First of all we note that, due to axioms $(d)$ and $(e)$,
each space $V_j$ is invariant with respect to the shifts
$t=p^{j}a$, $a\in I_p$. Show that the space
$\overline {\cup_{j\in {\bZ}} V_j}$ is invariant with respect to any
shift  $t\in\bQ_p$. Every $t\in\bQ_p$ may be approximated by
a vector  $p^{j}a$, $a\in I_p$, with arbitrary large $j\in\bZ$.
If $f\in \cup_{j\in\bZ} V_j$, by axiom  $(a)$ (which holds due to
Theorem~\ref{th1-1*}), then
$f\in V_j$ for all $j\ge j_1$. It follows from the continuity of
the function $\|f(\cdot+t)\|_2$ that
$f(\cdot + t) \in \overline{\cup_{j\in\bZ} V_j}$.
Now let $t\in\bQ_p$. If $g\in\overline{\cup_{j\in\bZ} V_j}$, then
approximating $g$ by the functions  $f\in \cup _{j\in\bZ} V_j$,
again using the continuity of the shift operator and the invariance
of $L_2$ norm with respect to the shifts, we derive
$g(\cdot + t) \in \overline{\cup_{j\in\bZ} V_j}$.
For $X\subset L^2(\bQ_p)$, set $\widehat X=\{\ \widehat f: f\in X\}$.
By the Wiener theorem for $L_2$ (see, e.j., \cite{NPS}, all the arguments
of the proof given there may be repeated word for word with replacing
 $\bR$ by $\bQ_p$), a closed subspace
$X$ of the space $L^2(\bQ_p)$ is invariant with respect to the shifts
if and only if $\widehat X=L_2(\Omega)$ for some set
$\Omega\subset\bQ_p$. Let $X=\overline{\cup_{j\in\bZ}V_j}$, then
$\widehat X=L_2(\Omega)$. Thus $X=L^2(\bQ_p)$ if and only if $\Omega=\bQ_p$. Set
$\phi_j=\phi(p^{-j}\cdot),\ \ \Omega_0=\cup_{j\in\bZ}{\rm supp}\, \widehat\phi_j$
and prove that $\Omega=\Omega_0$. Since $\phi_j\in V_j,$ $j\in\bZ$,
we have ${\rm supp}\, \widehat\phi_j\subset\Omega$, and hence
$\Omega_0 \subset \Omega$.
Now assume that $\Omega\backslash\Omega_0$ contains a set of
positive measure $\Omega_1$. If $f\in V_j$, taking the Fourier
transform from the expansion $f=\sum_{a\in I_p} h_a \phi(p^{-j}\cdot -a)$
we see that $\widehat f=0$ almost everywhere  on $\Omega_1.$
Hence the same is true for any $f\in \cup _{j\in\bZ} V_j$.
Passing to the limit we deduce that that the Fourier transform
of any $f\in X$ is equal to zero almost everywhere on  $\Omega_1$, i.e.,
$L_2(\Omega)=L_2(\Omega_0)$.
It remains to note that
${\rm supp\,} \widehat\phi_j={\rm supp\,}\widehat\phi({p}^{j}\cdot) $
\end{proof}

\begin{theorem}
\label{th1-4*}
If $\phi \in  L^2(\bQ_p)$ and the system $\{\phi(x-a):a\in I_p\}$ is orthonormal,
then axiom $(c)$ of Definition~{\rm\ref{de1}} holds, i.e., $\cap_{j\in\bZ}V_j=\{0\}$.
\end{theorem}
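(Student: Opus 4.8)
The plan is to show that the orthogonal projections onto the spaces $V_j$ tend to $0$ as $j\to-\infty$, which immediately forces the intersection to be trivial. Write $P_j$ for the orthogonal projection of $L^2(\bQ_p)$ onto $V_j$. Since $\{p^{j/2}\phi(p^{-j}\cdot-a):a\in I_p\}$ is an orthonormal basis of $V_j$ (orthonormality follows from the hypothesis on the $I_p$-shifts of $\phi$ together with the substitution $y=p^{-j}x$), one has
\be
\|P_jf\|_2^2=\sum_{a\in I_p}\bigl|\langle f,p^{j/2}\phi(p^{-j}\cdot-a)\rangle\bigr|^2 .
\ee
If $f\in\cap_{j\in\bZ}V_j$, then $P_jf=f$ for every $j$, so $\|f\|_2=\|P_jf\|_2$; hence it suffices to prove that $\|P_jf\|_2\to0$ as $j\to-\infty$ for all $f\in L^2(\bQ_p)$. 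Because each $P_j$ has norm $\le 1$ and the compactly supported $L^2$-functions are dense, I may assume $f$ is supported in a fixed disc $K=B_M(0)$.

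For such $f$ I would bound each coefficient by the Cauchy--Schwarz inequality over $K$ and then sum, interchanging summation and integration, to obtain
\be
\|P_jf\|_2^2\le\|f\|_2^2\int_K\sum_{a\in I_p}p^{j}\bigl|\phi(p^{-j}x-a)\bigr|^2\,dx .
\ee
The next step is to recognize the inner sum as a dilate of the periodization $\Phi(y):=\sum_{a\in I_p}|\phi(y-a)|^2$. The substitution $y=p^{-j}x$ converts the right-hand integral into $\|f\|_2^2\int_{p^{-j}K}\Phi(y)\,dy$, where $p^{-j}K=B_{M+j}(0)$ shrinks to arbitrarily small discs about the origin as $j\to-\infty$.

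The key point, and the main thing to verify carefully, is that $\Phi$ is integrable near the origin. This follows from the fact that the discs $\{B_0(-a):a\in I_p\}$ tile $\bQ_p$: integrating termwise gives $\int_{B_0(0)}\Phi=\sum_{a\in I_p}\int_{B_0(-a)}|\phi|^2=\int_{\bQ_p}|\phi|^2=1$, so $\Phi\in L^1(B_0(0))$. By absolute continuity of the integral over the shrinking discs $B_{M+j}(0)$, we get $\int_{B_{M+j}(0)}\Phi\to0$, whence $\|P_jf\|_2\to0$ as $j\to-\infty$. Together with the reduction above this yields $f=0$, i.e. $\cap_jV_j=\{0\}$. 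I expect the only delicate points to be the justification of the termwise integration defining $\Phi$ (monotone convergence, using $\Phi\ge 0$) and the remark that the argument nowhere invokes the nesting $V_j\subset V_{j+1}$, which is not among the hypotheses here; it uses only the orthonormality of the $I_p$-shifts of $\phi$.
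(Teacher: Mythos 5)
Your proof is correct and takes essentially the same route as the paper's: both reduce, via density and Bessel's inequality, to showing $\sum_{a\in I_p}\bigl|\bigl(f,p^{j/2}\phi(p^{-j}\cdot-a)\bigr)\bigr|^2\to 0$ as $j\to-\infty$ for compactly supported $f$, and both convert this bound by a change of variables into the integral of $|\phi|^2$ over a region that shrinks to a null set. The only cosmetic differences are that you use Cauchy--Schwarz where the paper bounds a test function by its sup-norm, and you phrase the final limit as absolute continuity of the integral of the periodization $\Phi(y)=\sum_{a\in I_p}|\phi(y-a)|^2$ on $B_0(0)$ where the paper applies dominated convergence to $|\phi|^2$ on $\bQ_p$; these are equivalent.
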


\begin{proof}
First, using the standard scheme (see, e.g.,~\cite[Lemma~1.2.8.]{NPS}), we prove
that for any $f\in  L^2(\bQ_p)$
\begin{equation}
\label{62.0-11-1}
\lim_{j\to -\infty}\sum_{a\in I_p}
\big|\big(f,p^{j/2}\phi(p^{-j}\cdot-a)\big)\big|^2=0,
\end{equation}
where $(\cdot,\,\cdot)$ is the scalar product in $ L^2(\bQ_p)$.
Since the space $\cD(\bQ_p)$ is dense in $ L^{2}(\bQ_p)$~\cite[VI.2]{Vl-V-Z},
it suffices to prove (\ref{62.0-11-1}) for any $\varphi\in \cD(\bQ_p)$.
If $\varphi\in \cD(\bQ_p)$, then there exists $N$ such that
$\varphi(x)=0$ for all $|\xi|_p>p^{N}$. Since $|\varphi(x)|\le M$
for all $|\xi|_p\le p^{N}$, we have
$$
\sum_{a\in I_p}\big|\big(\varphi(\cdot),p^{j/2}\phi(p^{-j}\cdot-a)\big)\big|^2
\le p^{j}\sum_{a\in I_p}\bigg(\int_{|x|_p\le p^{N}}|\varphi(x)||\phi(p^{-j}x-a)|\,dx\bigg)^2
$$
$$
\qquad\qquad\qquad\qquad\qquad\qquad
\le p^{j+N}M^2\sum_{a\in I_p}\int_{|x|_p\le p^{N}}|\phi(p^{-j}x-a)|^2\,dx.
$$
By the change of variables $\eta=p^{-j}x-a$, we obtain
$$
\sum_{a\in I_p}\big|\big(\varphi,p^{j/2}\phi(p^{-j}\cdot-a)\big)\big|^2
\le M^2p^{N}\int\limits_{A_{Nj}}|\phi(\eta)|^2\,d\eta
=M^2p^{N}\int\limits_{\bQ_p}\theta_{Nj}(\eta)|\phi(\eta)|^2\,d\eta,
$$
where $\theta_{Nj}$ is the characteristic function of the set
$A_{Nj}=\cup_{a\in I_p}\{\eta:|\eta+a|_p\le 2^{N+j}\}$.
Since $\lim_{j\to -\infty}\theta_{Nj}(\eta)=0$ for any $\eta\ne -a$, using
the Lebesgue dominated convergence theorem~\cite[IV.4]{Vl-V-Z}, we obtain
$\lim_{j\to-\infty}\int_{\bQ_p}\theta_{Nj}(\eta)|\phi(\eta)|^2\,d\eta=0$.

If now we assume that $f\in \cap_{j\in\bZ}V_j$, then  $f\in V_j$ for
all $j\in\bZ$ and, due to (\ref{62.0-11-1}),
$$
\|f\|=\bigg(\sum_{a\in I_p}\big|\big(f,p^{j/2}\phi(p^{-j}\cdot-a)\big)\big|^2\bigg)^{1/2}
\to 0, \quad j\to -\infty,
$$
i.e., $\|f\|=0$ which was to be proved.
\end{proof}

\begin{theorem}
\label{th1-2*}
Let $\phi$ be a refinable function such that ${\rm supp\,{\widehat\phi}}\subset B_0(0)$.
If $|{\widehat\phi}(\xi)|=1$ for all $\xi\in B_0(0)$ then the system
$\{\phi(x-a):a\in I_p\}$ is orthonormal.
\end{theorem}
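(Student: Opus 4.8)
The plan is to compute the Gram matrix of the system directly on the Fourier side, where both hypotheses on $\widehat\phi$ live, and to reduce every entry to a single elementary integral over the unit disc. Since the Fourier transform is unitary on $L^2(\bQ_p)$, I would start from Plancherel's identity
$$
\big(\phi(\cdot-a),\phi(\cdot-b)\big)=\big(F[\phi(\cdot-a)],F[\phi(\cdot-b)]\big),\qquad a,b\in I_p,
$$
and then apply the translation rule (\ref{14}) with dilation factor $1$, which gives $F[\phi(\cdot-a)](\xi)=\chi_p(a\xi)\widehat\phi(\xi)$. Multiplying the two transforms and using $\overline{\chi_p(b\xi)}=\chi_p(-b\xi)$, the two exponentials combine into a single character, so that
$$
\big(\phi(\cdot-a),\phi(\cdot-b)\big)=\int_{\bQ_p}\chi_p\big((a-b)\xi\big)\,|\widehat\phi(\xi)|^2\,d\xi.
$$

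Next I would feed in the two assumptions. Because ${\rm supp\,}\widehat\phi\subset B_0(0)$ and $|\widehat\phi(\xi)|=1$ on $B_0(0)$, the weight $|\widehat\phi(\xi)|^2$ is exactly the characteristic function $\Omega(|\xi|_p)$ of the unit disc, so the Gram entry collapses to $\int_{B_0(0)}\chi_p\big((a-b)\xi\big)\,d\xi$. This is precisely the Fourier transform of $\Omega(|\cdot|_p)$ evaluated at $a-b$, and by (\ref{14.1}) with $k=0$ it equals $\Omega(|a-b|_p)$; that is, the entry is $1$ when $|a-b|_p\le1$ and $0$ when $|a-b|_p>1$.

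It then remains to read off orthonormality from the arithmetic of the shift set. For $a=b$ the entry is $\Omega(0)=1$. For distinct $a,b\in I_p$ I would invoke the disjointness of the ``natural'' tiling $\bQ_p=\cup_{a\in I_p}B_0(a)$ recorded in Section~\ref{s2}: since $B_0(a)$ and $B_0(b)$ are disjoint, their centres satisfy $|a-b|_p>1$, whence $\Omega(|a-b|_p)=0$. This yields $\big(\phi(\cdot-a),\phi(\cdot-b)\big)=1$ for $a=b$ and $0$ otherwise, which is the claimed orthonormality.

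I expect the computation itself to be entirely routine; the only points requiring care are the bookkeeping in the translation rule (\ref{14}) (tracking the character and its conjugate so that the two shifts genuinely combine into $\chi_p((a-b)\xi)$) and the justification that distinct elements of $I_p$ are separated in norm, $|a-b|_p>1$. The latter is exactly the content of the disjoint decomposition $\bQ_p=\cup_{a\in I_p}B_0(a)$ established earlier, so I do not anticipate any real obstacle.
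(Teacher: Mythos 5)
Your proposal is correct and follows essentially the same route as the paper: Plancherel, the translation rule to turn the Gram entry into $\int_{B_0(0)}|\widehat\phi(\xi)|^2\chi_p((a-b)\xi)\,d\xi$, the hypothesis $|\widehat\phi|=1$ to reduce the integrand to $\Omega(|\xi|_p)$, and formula (\ref{14.1}) to evaluate the integral as $\Omega(|a-b|_p)$. The only difference is cosmetic: the paper computes $(\phi,\phi(\cdot-a))$ for a single shift $a\in I_p$, while you treat general pairs $(a,b)$ and justify $|a-b|_p>1$ via the disjoint tiling, which is a slightly more complete bookkeeping of the same argument.
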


\begin{proof}
Taking into account formula (\ref{14.1}),  using  the inclusion
${\rm supp\,{\widehat\phi}}\subset B_0(0)$ and the Plancherel
formula, we have for any $a\in I_p$
$$
\big(\phi(\cdot),\phi(\cdot-a)\big)=\int_{\bQ_p}\phi(x){\overline\phi}(x-a)\,dx
=\int_{B_0(0)}|{\widehat\phi}(\xi)|^2\chi_p(a\xi)\,d\xi
$$
$$
\qquad
=\int_{B_0(0)}\chi_p(a\xi)\,d\xi=\int_{\bQ_p}\Omega(|\xi|_p)\chi_p(a\xi)\,d\xi
=\Omega(|a|_p)=\delta_{a 0}.
$$
\end{proof}

So, to construct a MRA we can take a  function $\phi$ for which the hypotheses
of Theorems~\ref{th1-2*} and~(\ref{dnn14}) are fulfilled. Next
we are going to describe all such functions.

\begin{proposition}
\label{pr1-1}
If $\phi\in  L^2(\bQ_p)$ is a solution of refinable
equation~(\ref{62.0-6}), ${\widehat\phi}(\xi)$ is continuous at
the point $0$ and ${\widehat\phi}(0)\ne 0$, then
\begin{equation}
\label{62.0-8}
{\widehat\phi}(\xi)={\widehat\phi}(0)\prod_{j=1}^{\infty}m_0\Big(\frac{\xi}{p^{s-j}}\Big).
\end{equation}
\end{proposition}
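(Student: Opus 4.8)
The plan is to iterate the Fourier-side refinement equation~(\ref{62.0-6}) and pass to the limit, mimicking the classical derivation of the infinite-product formula, the two essential inputs being the continuity of $\widehat\phi$ at the origin and the fact that $|p^n\xi|_p\to0$ as $n\to\infty$.

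First I would iterate~(\ref{62.0-6}). Substituting $\xi\mapsto p^{k-1}\xi$ into~(\ref{62.0-6}) and using $p^{k-1}\xi/p^{s-1}=\xi/p^{s-k}$ gives $\widehat\phi(p^{k-1}\xi)=m_0\bigl(\xi/p^{s-k}\bigr)\,\widehat\phi(p^{k}\xi)$, so an easy induction on $n$ yields, for every $n\in\bN$ and every $\xi\in\bQ_p$,
\be
\widehat\phi(\xi)=\Bigl(\prod_{j=1}^{n}m_0\bigl(\xi/p^{s-j}\bigr)\Bigr)\,\widehat\phi(p^{n}\xi).
\label{plan-iter}
\ee

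Next I would send $n\to\infty$. Since $|p^{n}\xi|_p=p^{-n}|\xi|_p\to0$, the point $p^{n}\xi$ tends to $0$ in $\bQ_p$, and the assumed continuity of $\widehat\phi$ at $0$ gives $\widehat\phi(p^{n}\xi)\to\widehat\phi(0)$. It then remains to control the partial products in~(\ref{plan-iter}). Here the $p$-adic setting is actually more forgiving than the real one: the $j$-th factor has argument $\xi/p^{s-j}=p^{j-s}\xi$, whose norm $p^{s-j}|\xi|_p$ tends to $0$ as $j\to\infty$, so for all $j$ large enough (depending on $\xi$) the argument lies in $B_0(0)$. Because $\chi_p(k\,\cdot)\equiv1$ on $B_0(0)$ for every nonnegative integer $k$, one has $m_0\equiv m_0(0)=1$ there (using $\widehat\phi(0)\ne0$), and hence all but finitely many factors equal $1$. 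Thus $\prod_{j=1}^{\infty}m_0(\xi/p^{s-j})$ is in fact a \emph{finite} product for each fixed $\xi$ and converges trivially; passing to the limit in~(\ref{plan-iter}) produces~(\ref{62.0-8}).

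The only delicate point is the convergence of the infinite product, and I expect this to be the crux rather than a genuine obstacle. If one prefers not to invoke the local constancy of $m_0$, the convergence can instead be read directly off~(\ref{plan-iter}): since $\widehat\phi(0)\ne0$, the factor $\widehat\phi(p^{n}\xi)$ is nonzero for all large $n$, so the $n$-th partial product equals $\widehat\phi(\xi)/\widehat\phi(p^{n}\xi)$, which converges to $\widehat\phi(\xi)/\widehat\phi(0)$. This simultaneously establishes convergence and identifies the limit, again giving~(\ref{62.0-8}).
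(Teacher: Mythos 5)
Your proposal is correct and follows essentially the same route as the paper: iterate the Fourier-side equation~(\ref{62.0-6}) to get $\widehat\phi(\xi)=\prod_{j=1}^{N}m_0(\xi/p^{s-j})\,\widehat\phi(p^{N}\xi)$ and let $N\to\infty$ using continuity of $\widehat\phi$ at $0$ and $|p^{N}\xi|_p\to0$. Your additional observation that the product is actually finite for each fixed $\xi$ (since $|\xi/p^{s-j}|_p=p^{s-j}|\xi|_p\to0$, so all but finitely many factors equal $m_0(0)=1$) is a correct and worthwhile justification of the convergence, which the paper leaves implicit.
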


\begin{proof}
Iterating {\rm(\ref{62.0-6})} $N$ times, $N\ge 1$, we have
\begin{equation}
\label{62.0-9}
{\widehat\phi}(\xi)=\prod_{j=1}^{N}m_0\Big(\frac{\xi}{p^{s-j}}\Big)
{\widehat\phi}(p^{N}\xi).
\end{equation}
Taking into account that ${\widehat\phi}(\xi)$ is continuous at
the point $0$ and the fact that $|p^{N}\xi|_p=p^{-N}|\xi|_p\to 0$
as $N\to +\infty$ for any $\xi\in\bQ_p$, we obtain {\rm(\ref{62.0-8})}.
\end{proof}

\begin{proposition}
\label{pr1-2}
If ${\widehat\phi}$ is defined by {\rm(\ref{62.0-8})}, where $m_0$ is
a trigonometric polynomial {\rm(\ref{62.0-7-1})}, $m_0(0)=1$, then
{\rm(\ref{62.0-6})} holds. Furthermore, if $\xi\in \bQ_p$ such that $|\xi|_p=p^{-n}$,
then  ${\widehat\phi}(\xi)={\widehat\phi}(0)$ for $n\ge s-1$,  and
\begin{equation}
\label{62.0-10}
{\widehat\phi}(\xi)={\widehat\phi}(0)
\prod_{j=1}^{s-n-1}m_0\Big(\frac{\xi}{p^{s-j}}\Big)
={\widehat\phi}(0)\prod_{j=1}^{s-n-1}m_0\Big(\frac{\tilde{\xi}}{p^{s-j}}\Big)
={\widehat\phi}(\tilde{\xi})
\end{equation}
where $\tilde{\xi}=\xi_{n}p^{n}+\xi_{n+1}p^{n+1}+\cdots+\xi_{s-2}p^{s-2}$,
$\xi_{n}\ne 0$, for $n\le s-2$.
\end{proposition}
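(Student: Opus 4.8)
The plan is to establish the three assertions in turn, everything resting on a single observation about $m_0$ on the unit disc. First I would verify that (\ref{62.0-8}) satisfies (\ref{62.0-6}): replacing $\xi$ by $p\xi$ in (\ref{62.0-8}) gives $\widehat\phi(p\xi)=\widehat\phi(0)\prod_{j=1}^\infty m_0\bigl(\xi/p^{s-j-1}\bigr)$, and reindexing $i=j+1$ turns this into $\widehat\phi(0)\prod_{i=2}^\infty m_0\bigl(\xi/p^{s-i}\bigr)$. Multiplying by the absent $i=1$ factor $m_0\bigl(\xi/p^{s-1}\bigr)$ reconstitutes the full product, namely $\widehat\phi(\xi)$, which is precisely (\ref{62.0-6}).

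The observation is that $m_0$ is constant on $B_0(0)$. Indeed, each $k$ is an integer and $\chi_p(\,\cdot\,)$ is trivial on $B_0(0)$ (as already used in the proof of Theorem~\ref{th1-1*}), so $\chi_p(k\eta)=1$ whenever $|\eta|_p\le1$; hence $m_0(\eta)=\frac1p\sum_k\beta_k=m_0(0)=1$ for every $\eta\in B_0(0)$. Consequently the factor $m_0\bigl(\xi/p^{s-j}\bigr)$ equals $1$ exactly when $|\xi/p^{s-j}|_p=|\xi|_p\,p^{s-j}\le1$, i.e. when $j\ge s-n$ for $|\xi|_p=p^{-n}$. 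This collapses the infinite product in (\ref{62.0-8}) to the range $j=1,\dots,s-n-1$, which is the first equality of (\ref{62.0-10}); when $n\ge s-1$ this range is empty and $\widehat\phi(\xi)=\widehat\phi(0)$, as claimed.

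For the second equality I would show each surviving factor is unaffected by the truncation $\xi\mapsto\tilde\xi$. Since $\xi-\tilde\xi=\sum_{m\ge s-1}\xi_mp^m$ has $|\xi-\tilde\xi|_p\le p^{-(s-1)}$, for $1\le j\le s-n-1$ and $0\le k\le p^s-1$ one estimates $\bigl|k(\xi-\tilde\xi)/p^{s-j}\bigr|_p\le p^{-(s-1)}p^{s-j}=p^{1-j}\le1$, so that $\chi_p\bigl(k(\xi-\tilde\xi)/p^{s-j}\bigr)=1$ and hence $m_0\bigl(\xi/p^{s-j}\bigr)=m_0\bigl(\tilde\xi/p^{s-j}\bigr)$ term by term. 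Finally, since $|\tilde\xi|_p=p^{-n}$ as well, the truncation argument of the previous paragraph applied to $\tilde\xi$ identifies $\widehat\phi(0)\prod_{j=1}^{s-n-1}m_0\bigl(\tilde\xi/p^{s-j}\bigr)$ with the full product, namely $\widehat\phi(\tilde\xi)$, which closes the chain of equalities.

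Everything is elementary once $m_0\equiv1$ on $B_0(0)$ is in hand; the part requiring care is the bookkeeping of $p$-adic digits and index ranges — in particular checking that the norm estimate in the third step is $\le1$ and not larger at the endpoint $j=1$, and that the cutoff $j\ge s-n$ matches the stated product length $s-n-1$. Convergence of the infinite product is not a separate issue, since the same observation shows that all but finitely many factors equal $1$.
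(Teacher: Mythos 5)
Your proof is correct and follows essentially the same route as the paper: both arguments rest on the observations that $\chi_p(k\,\cdot)$ is trivial on $B_0(0)$ (so the factors with $j\ge s-n$ equal $1$ and the infinite product collapses) and that $\xi-\tilde\xi\in B_{-s+1}(0)$ leaves the surviving factors unchanged. You merely spell out the norm estimates that the paper states without computation; the only quibble is that ``equals $1$ exactly when'' should be ``equals $1$ whenever,'' but only that direction is used.
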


\begin{proof}
Relation {\rm(\ref{62.0-8})} implies ${\widehat\phi}(p\xi)
={\widehat\phi}(0)\prod_{j=1}^{\infty}m_0\big(\frac{\xi}{p^{s-j-1}}\big)$
and, consequently, {\rm(\ref{62.0-6})}.
Let  $n\le s-2$,  $|\xi|_p=p^{-n}$, i.e., $\xi=\xi_{n}p^{n}+\xi_1p^{n+1}+\cdots$,
$\xi_{n}\ne 0$. Since $\chi_p\big(\frac{k\xi'}{p^{s-j}}\big)=1$,
whenever $\xi'\in B_{-s+1}(0)$, $j\in\bN$, $k=0,1,\dots,p^s-1$,
and $\chi_p\big(\frac{k\xi}{p^{s-j}}\big)=1$,  whenever
$j\ge s-n$, we have (\ref{62.0-10}).
It is clear that ${\widehat\phi}(\xi)={\widehat\phi}(0)$ for $n\ge s-1$.
\end{proof}

\begin{corollary}
\label{cor5-1}
The function ${\widehat\phi}$ from Proposition~{\rm\ref{pr1-2}} is
locally-constant. Moreover, if $M\ge -s+2$, \ ${\rm supp}{\widehat\phi}\subset B_{M}(0)$, then
for any $k=0,1,\dots,p^{M+s-1}-1$ and for all $x\in B_{s-1}\big(\frac{k}{p^M}\big)$
we have ${\widehat\phi}(x)={\widehat\phi}\big(\frac{k}{p^M}\big)$.
\end{corollary}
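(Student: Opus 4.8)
The plan is to reduce the entire statement to a single elementary periodicity property of the symbol $m_0$ and then read everything off the product formula~(\ref{62.0-8}).

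First I would record that $m_0$ is constant on the cosets of $B_0(0)=\{|\zeta|_p\le 1\}$. Indeed, for $|\zeta|_p\le 1$ and any $k\in\{0,1,\dots,p^s-1\}$ we have $|k\zeta|_p\le 1$, so $\chi_p(k\zeta)=1$; hence, from~(\ref{62.0-7-1}), $m_0(\eta+\zeta)=\frac1p\sum_k\beta_k\chi_p(k\eta)\chi_p(k\zeta)=m_0(\eta)$ for every $\eta$. This is precisely the computation already carried out in the proof of Proposition~\ref{pr1-2}, and it is the only input needed.

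Next I would establish local constancy. Fix $\xi$ and take any $y$ with $|y|_p\le p^{-s+1}$. For every $j\ge 1$ one has $|y/p^{s-j}|_p=|y|_p\,p^{s-j}\le p^{1-j}\le 1$, so $y/p^{s-j}\in B_0(0)$ and, by the previous paragraph, $m_0\big((\xi+y)/p^{s-j}\big)=m_0\big(\xi/p^{s-j}\big)$. Multiplying these equalities over $j$ in~(\ref{62.0-8}) (the product is finite by Proposition~\ref{pr1-2}) yields $\widehat\phi(\xi+y)=\widehat\phi(\xi)$. Thus $\widehat\phi$ is constant on every ball of radius $p^{-s+1}$, i.e. on every coset of $B_{-s+1}(0)$, and in particular it is locally constant. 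Note that the relevant scale is $p^{-s+1}$, so the ball in the statement should be $B_{-s+1}(k/p^M)$.

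Finally, the ``moreover'' assertion is just the observation that the points $k/p^M$ form a transversal for these cosets inside $B_M(0)$. Since $M\ge -s+2>-s+1$, the ball $B_M(0)$ splits into $p^{M-(-s+1)}=p^{M+s-1}$ cosets of $B_{-s+1}(0)$. Writing $k=k_0+k_1p+\dots+k_{M+s-2}p^{M+s-2}$ gives $k/p^M=k_0p^{-M}+\dots+k_{M+s-2}p^{s-2}$, whose nonzero $p$-adic digits sit exactly in the positions $-M,\dots,s-2$ that distinguish the cosets; two distinct choices of $k$ differ by a number of norm $\ge p^{-(s-2)}=p^{2-s}>p^{-s+1}$ and hence lie in different cosets. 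As there are exactly $p^{M+s-1}$ admissible $k$, the balls $B_{-s+1}(k/p^M)$ partition $B_M(0)$, so every $x\in B_M(0)$ lies in precisely one of them and $\widehat\phi(x)=\widehat\phi(k/p^M)$ by constancy; any $x\notin B_M(0)$ lies in none of these balls (each is contained in $B_M(0)$ since $-s+1\le M$), where $\widehat\phi$ vanishes anyway. The computations are all routine; the only point requiring care is this last $p$-adic digit bookkeeping — verifying that the range $k=0,\dots,p^{M+s-1}-1$ produces exactly one representative per coset at the correct scale $p^{-s+1}$ — together with recognizing that this scale, not $p^{s-1}$, is the right radius for the balls in the statement.
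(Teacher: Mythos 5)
Your proof is correct and is essentially the argument the paper intends: the corollary is given no separate proof, being an immediate consequence of Proposition~\ref{pr1-2} — formula~(\ref{62.0-10}) already shows that $\widehat\phi(\xi)$ depends only on the digits of $\xi$ up to position $s-2$, i.e.\ is constant on cosets of $B_{-s+1}(0)$, and your count of the $p^{M+s-1}$ cosets of $B_{-s+1}(0)$ in $B_M(0)$ with representatives $k/p^M$ supplies the remaining bookkeeping. You are also right that, with the paper's convention $B_\gamma(a)=\{x:|x-a|_p\le p^\gamma\}$, the ball in the statement should read $B_{-s+1}\big(\frac{k}{p^M}\big)$ rather than $B_{s-1}\big(\frac{k}{p^M}\big)$, as confirmed by the balls $|\xi-k|_p\le p^{-s+1}$ used in the proof of Theorem~\ref{th1-5*}.
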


\begin{proposition}
\label{pr1-3}
Let ${\widehat\phi}$ be defined by {\rm(\ref{62.0-8})}, where $m_0$ is
a trigonometric polynomial~{\rm(\ref{62.0-7-1})}.
If $m_0(0)=1$, $m_0\big(\frac{k}{p^{s}}\big)=0$ for all $k=1,\dots,p^s-1$
which are not divisible by $p$,
then ${\rm supp\,{\widehat\phi}}\subset B_0(0)$, ${\widehat\phi}\in  L^2(\bQ_p)$.
If, furthermore,  $\big|m_0\big(\frac{k}{p^{s}}\big)\big|=1$ for all $k=1,\dots,p^s-1$
which are divisible by $p$, then $|{\widehat\phi}(x)|=|{\widehat\phi}(0)|$
for any $x\in B_0(0)$.
\end{proposition}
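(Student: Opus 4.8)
The plan is to reduce everything to the $B_0(0)$-periodicity of $m_0$ together with the product formula (\ref{62.0-8}), and then to identify, for each $\xi$, precisely which sample points $k/p^s$ the factors $m_0(\xi/p^{s-j})$ land on. First I would record that $m_0$ depends only on the fractional part of its argument: since every $k$ in (\ref{62.0-7-1}) is an integer, $\chi_p(kb)=1$ for $b\in B_0(0)$, whence $m_0(\xi+b)=m_0(\xi)$ and $m_0(\xi)=m_0(\{\xi\}_p)$. In particular $m_0\equiv1$ on $B_0(0)$, so for every $\xi$ the product (\ref{62.0-8}) is in fact finite, exactly as already exploited in Proposition~\ref{pr1-2}.

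For the support claim, fix $\xi$ with $|\xi|_p=p^n$, $n\ge1$, and write $\xi=\xi_{-n}p^{-n}+\xi_{-n+1}p^{-n+1}+\cdots$ with $\xi_{-n}\ne0$. I would single out the factor with $j=n$, namely $m_0(\xi p^{n-s})$. The argument $\xi p^{n-s}$ has valuation $-s$, so $\{\xi p^{n-s}\}_p=k/p^s$ with $1\le k\le p^s-1$ and $k\equiv\xi_{-n}\not\equiv0\pmod p$; that is, a sample point whose index is not divisible by $p$. By hypothesis this factor is $0$, so the whole product vanishes and $\widehat\phi(\xi)=0$. This yields ${\rm supp}\,\widehat\phi\subset B_0(0)$. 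Then $\widehat\phi\in L^2(\bQ_p)$ follows at once: by Corollary~\ref{cor5-1} the function $\widehat\phi$ is locally constant on the compact set $B_0(0)$, hence bounded, and its support has measure $1$.

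For the modulus claim, fix $\xi\in B_0(0)$, $|\xi|_p=p^{-n}$, $n\ge0$, and write $\xi=\xi_np^n+\xi_{n+1}p^{n+1}+\cdots$ with $\xi_n\ne0$. If $n\ge s-1$, Proposition~\ref{pr1-2} already gives $\widehat\phi(\xi)=\widehat\phi(0)$. If $0\le n\le s-2$, I would show that each factor of $\widehat\phi(\xi)=\widehat\phi(0)\prod_{j=1}^{s-n-1}m_0(\xi p^{j-s})$ has modulus $1$. Indeed, for $1\le j\le s-n-1$ the fractional part of $\xi p^{j-s}$ equals $k_j/p^s$ with $k_j=\sum_{m=n}^{s-j-1}\xi_mp^{m+j}$; since the lowest exponent occurring here is $n+j\ge1$, the index $k_j$ is nonzero, smaller than $p^s$, and divisible by $p$. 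By the additional hypothesis $|m_0(k_j/p^s)|=1$, so the product of the $s-n-1$ factors has modulus $1$ and $|\widehat\phi(\xi)|=|\widehat\phi(0)|$.

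The one step requiring care is the $p$-adic digit bookkeeping that turns $\{\xi p^{j-s}\}_p$ into the correct sample $k/p^s$ and tracks the divisibility of its index by $p$; this is precisely what must come out as ``not divisible by $p$'' in the support claim and as ``divisible by $p$'' in the modulus claim. Once the periodicity reduction and the resulting finiteness of the product are in hand, the remaining verifications are routine.
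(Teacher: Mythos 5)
Your proof is correct and follows essentially the same route as the paper's: both rest on the product (\ref{62.0-8}) being effectively finite and on locating, for each $\xi$, a factor $m_0(\xi/p^{s-j})$ equal to a sample $m_0(k/p^s)$ with $k$ not divisible by $p$ (giving the vanishing for $|\xi|_p>1$) or with $k$ divisible by $p$ (giving the modulus claim). The only cosmetic differences are that you pick out the vanishing factor $j=n$ directly for $|\xi|_p=p^n$ where the paper first reduces to $|\xi|_p=p$ via (\ref{62.0-6}), and that you spell out the digit bookkeeping for the modulus claim, which the paper leaves as a citation of Propositions~\ref{pr1-1} and~\ref{pr1-2}.
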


\begin{proof}
By Proposition~\ref{pr1-2}, ${\widehat\phi}$ satisfyes (\ref{62.0-6}) and (\ref{62.0-10}).
Let us check that ${\widehat\phi}(\xi)=0$ for all $\xi$ such that $|\xi|_p=p^{M}$,
$M\ge 1$. It follows from (\ref{62.0-6}) that it suffices to consider only $M=1$.
Let $|\xi|_p=p$, i.e., $\xi=\frac{1}{p}\xi_{-1}+\xi_0 +\xi_1p+\dots+\xi_{s-2}p^{s-2}+\xi'$,
where $\xi_{-1}\ne 0$, $\xi'\in B_{-s+1}(0)$.
In view of (\ref{62.0-10}), we have
${\widehat\phi}(\xi)={\widehat\phi}(\tilde{\xi})={\widehat\phi}\big(\frac{k}{p}\big)$,
where $k=\xi_{-1}+\xi_0p+\xi_1p^2+\dots+\xi_{s-2}p^{s-1}$, $\xi_{-1}\ne 0$.
Note that the first factor of the product in (\ref{62.0-10}) is $m_0\big(\frac{k}{p^{s}}\big)=0$.
Thus ${\widehat\phi}(\xi)=0$ for all $\xi$ such that $|\xi|_p=p$.
The rest statements follow from Propositions~\ref{pr1-1},~\ref{pr1-2}.
\end{proof}

Due to  Theorems~\ref{th1-1*}-\ref{th1-2*}, the
refinable functions with masks satisfying the hypotheses of
Proposition~\ref{pr1-3}  generate MRAs.
Next, we will see that all properties of a mask $m_0$ described in
Propositions~\ref{pr1-3} are necessary for the corresponding
refinable function $\phi$ to be such that ${\rm supp\,{\widehat\phi}}\subset B_0(0)$
and the system $\{\phi(x-a):a\in I_p\}$ is orthonormal.

\begin{theorem}
\label{th1-5*}
Let ${\widehat\phi}$ be defined by {\rm(\ref{62.0-8})}, where $m_0$ is
a trigonometric polynomial~{\rm(\ref{62.0-7-1})}.
If  ${\rm supp\,{\widehat\phi}}\subset B_0(0)$ and the system $\{\phi(x-a):a\in I_p\}$
is orthonormal, then $\big|m_0\big(\frac{k}{p^{s}}\big)\big|=0$ whenever $k$ is not divisible
by $p$, and $\big|m_0\big(\frac{k}{p^{s}}\big)\big|=1$ whenever $k$ is divisible by $p$,
\ $k=1,2,\dots,p^s-1$.
\end{theorem}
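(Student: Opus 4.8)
The plan is to reduce the orthonormality hypothesis to the single pointwise statement that $|\widehat\phi(\xi)|=1$ for every $\xi\in B_0(0)$ (this is exactly the converse of Theorem~\ref{th1-2*}), and then to read the values $m_0(k/p^s)$ straight off the Fourier-side refinement equation (\ref{62.0-6}). Throughout I would use that $\widehat\phi(0)\ne0$: orthonormality gives $\|\phi\|_2=1$, so $\widehat\phi\not\equiv0$, and (\ref{62.0-8}) then forces $\widehat\phi(0)\ne0$; in particular $m_0(0)=1$, so Proposition~\ref{pr1-2} and Corollary~\ref{cor5-1} apply and $\widehat\phi$ is locally constant.

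First I would translate orthonormality into a periodization identity. Using (\ref{14}) and the Plancherel formula, for $a,a'\in I_p\cup\{0\}$ one has $\big(\phi(\cdot-a),\phi(\cdot-a')\big)=\int_{B_0(0)}|\widehat\phi(\xi)|^2\chi_p\big((a-a')\xi\big)\,d\xi$, the integral being over $B_0(0)$ because ${\rm supp\,}\widehat\phi\subset B_0(0)$. For $\xi\in B_0(0)$ the character $\chi_p\big((a-a')\xi\big)$ depends only on the class $c:=(a-a')\bmod B_0(0)\in I_p\cup\{0\}$, since $\chi_p$ is trivial on $B_0(0)$. Taking $a'=0$ and letting $a=c$ range over $I_p\cup\{0\}$, orthonormality becomes $\int_{B_0(0)}|\widehat\phi(\xi)|^2\chi_p(c\xi)\,d\xi=\delta_{c,0}$ for all $c\in I_p\cup\{0\}$.

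The main step — and the place where the $p$-adic structure really enters — is to recognize $\{\chi_p(c\,\cdot):c\in I_p\cup\{0\}\}$ as the complete character system of the compact group $B_0(0)$ with its normalized Haar measure; this is exactly the content of the natural decomposition $\bQ_p=\cup_{a\in I_p}B_0(a)$, which makes $I_p\cup\{0\}$ a full set of coset representatives for $\bQ_p$ modulo $B_0(0)$. Since this system is orthonormal and complete in $L^2(B_0(0))$, the relation above says precisely that the $L^1(B_0(0))$ function $|\widehat\phi|^2$ has the same Fourier coefficients as the constant $1$; by uniqueness of Fourier series on a compact group, $|\widehat\phi(\xi)|^2=1$ for almost every $\xi\in B_0(0)$. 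Because $\widehat\phi$ is locally constant, each clopen ball on which it is constant has positive measure, so in fact $|\widehat\phi(\xi)|=1$ for \emph{every} $\xi\in B_0(0)$. I expect this duality/periodization step to be the only real obstacle; the subtle point is the bookkeeping with the non-group set $I_p$, i.e. reducing the differences $a-a'$ modulo $B_0(0)$.

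The conclusion is then immediate from (\ref{62.0-6}). Putting $\xi=k/p$ gives $\widehat\phi(k/p)=m_0(k/p^s)\,\widehat\phi(k)$ for $k=1,\dots,p^s-1$. If $p\nmid k$ then $|k|_p=1$, so $|k/p|_p=p$ and $\widehat\phi(k/p)=0$ by the support condition, while $\widehat\phi(k)\ne0$ because $k\in B_0(0)$ and $|\widehat\phi|=1$ there; hence $m_0(k/p^s)=0$. If $p\mid k$, write $k=p\ell$ with $\ell$ an integer in $B_0(0)$; then $\widehat\phi(\ell)=\widehat\phi(k/p)$ and $\widehat\phi(p\ell)=\widehat\phi(k)$ both have modulus $1$, and $\widehat\phi(\ell)=m_0(k/p^s)\widehat\phi(p\ell)$ forces $|m_0(k/p^s)|=1$. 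This yields both assertions.
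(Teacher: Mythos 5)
Your proof is correct and reaches the conclusion by the same overall strategy as the paper --- pass to the Fourier side, use orthonormality to show $|\widehat\phi|\equiv 1$ on $B_0(0)$, then read the mask values off the refinement relation --- but both halves are executed differently. For the first half, the paper partitions $B_0(0)$ into the $p^{s-1}$ balls of radius $p^{-s+1}$ centred at $k=0,\dots,p^{s-1}-1$, on which $\widehat\phi$ is constant by Corollary~\ref{cor5-1}, and turns the orthonormality conditions for $a=0,\frac{1}{p^{s-1}},\dots,\frac{p^{s-1}-1}{p^{s-1}}$ into a finite discrete-Fourier linear system for the unknowns $|\widehat\phi(k)|^2$, whose unique solution is identically $1$; your appeal to the completeness of the character system $\{\chi_p(c\,\cdot):c\in I_p\}$ of the compact group $B_0(0)$ is the ``continuous'' version of the same computation, and both yield $|\widehat\phi|\equiv1$ on $B_0(0)$. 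The real divergence is in the second half, where your argument is genuinely shorter: the paper telescopes (\ref{62.0-8}) into the finite product (\ref{62.0-15}) and runs an induction on $N$ to establish $|m_0(l/p^{N+1})|=1$ before deriving the vanishing of the remaining mask values, whereas you evaluate the one-step relation (\ref{62.0-6}) at $\xi=k/p$, obtaining $\widehat\phi(k/p)=m_0(k/p^{s})\widehat\phi(k)$, and conclude at once from the support hypothesis when $k$ is not divisible by $p$ and from $|\widehat\phi(k/p)|=|\widehat\phi(k)|=1$ when it is. The only extra inputs your route needs are that $|\widehat\phi|=1$ at \emph{every} point of $B_0(0)$ (which you correctly upgrade from ``almost every'' via local constancy) and the preliminary facts $\widehat\phi(0)\ne0$, $m_0(0)=1$, which you justify at the outset; both are sound.
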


\begin{proof}
Let $a\in I_p$. Due to the orthonormality of $\{\phi(x-a):a\in I_p\}$,
using the Plancherel formula and Corollary~\ref{cor5-1}, we have
$$
\delta_{a 0}=
\big(\phi(\cdot),\phi(\cdot-a)\big)=\int_{\bQ_p}\phi(x){\overline\phi}(x-a)\,dx
=\int_{B_0(0)}|{\widehat\phi}(\xi)|^2\chi_p(a\xi)\,d\xi
$$
$$
=\sum_{k=0}^{p^{s-1}-1}\int_{|\xi-k|_p\le p^{-s+1}}|{\widehat\phi}(\xi)|^2\chi_p(a\xi)\,d\xi
=\sum_{k=0}^{p^{s-1}-1}|{\widehat\phi}(k)|^2\int_{|\xi-k|_p\le p^{-s+1}}\chi_p(a\xi)\,d\xi
$$
$$
=\sum_{k=0}^{p^{s-1}-1}|{\widehat\phi}(k)|^2\chi_p(ak)
\int\limits_{|\xi|_p\le p^{-s+1}}\chi_p(a\xi)
\,d\xi
=\frac{1}{p^{s-1}}\Omega(|p^{s-1}a|_p)
\sum_{k=0}^{p^{s-1}-1}|{\widehat\phi}(k)|^2\chi_p(ak).
$$

Since $\Omega(|p^{s-1}a|_p)\ne 0$ for all $a\in B_{-s+1}(0)$, this yields
$$
\frac{1}{p^{s-1}}\sum_{k=0}^{p^{s-1}-1}|{\widehat\phi}(k)|^2\chi_p(ak)=\delta_{a 0},
\quad a=0,\frac{1}{p^{s-1}},\dots,\frac{p^{s-1}-1}{p^{s-1}}.
$$

Consider these equalities as a linear system with respect to the unknowns
$z_k=|{\widehat\phi}(k)|^2$. It is well-known that the system has a unique
solution $z_k=1$, $k=0,1,\dots,p^{s-1}-1$, i.e., $|{\widehat\phi}(k)|=1$
for all $k=0,1,\dots,p^{s-1}-1$. In particular, it follows that $|{\widehat\phi}(0)|^2=1$,
which reduces (\ref{62.0-8}) to
\begin{equation}
\label{62.0-15}
{\widehat\phi}(\xi)=m_0\Big(\frac{\xi}{p^{s-1}}\Big)m_0\Big(\frac{\xi}{p^{s-2}}\Big)
\cdots m_0\big(\xi\big).
\end{equation}

Let us check that $\big|m_0\big(\frac{k}{p^{s}}\big)\big|=1$ for all $k$  divisible by $p$,
$k=1,2,\dots,p^{s-1}-1$. This is equivalent to $\big|m_0\big(\frac{k}{p^{N}}\big)\big|=1$
whenever $N=1,2,\dots,p^{s-1}$, $k=1,2,\dots,p^{N}-1$, $k$ is not divisible by $p$.
We will prove this statement by induction on $N$. For the inductive base with $N=1$,
note that $1=|{\widehat\phi}(p^{s-2})|=|m_0(\frac{1}{p})|$. For the inductive step,
assume that $\big|m_0\big(\frac{k}{p^{n}}\big)\big|=1$ for all $n=1,2,\dots,N$,
$N\le s-2$, $k=1,2,\dots,p^{n}-1$, $k$ is not divisible by $p$.
Using (\ref{62.0-15}) and $1$-periodicity of $m_0$, we have
$1=\Big|{\widehat\phi}\Big(lp^{s-N-2}\Big)\Big|
=\Big|m_0\Big(\frac{l}{p^{N+1}}\Big)m_0\Big(\frac{l}{p^{N}}\Big)
\cdots m_0\Big(\frac{l}{p^{N-s+2}}\Big)\Big|
=\Big|m_0\Big(\frac{l}{p^{N+1}}\big)\Big|$,
for all $l=1,2,\dots,p^{N+1}-1$, $l$ is not divisible by $p$.
Now  assume that $m_0\big(\frac{k}{p^{s}}\big)\ne0$ for some $k=1,2,\dots,p^{s}-1$
 not divisible by $p$. Since ${\widehat\phi}(\frac{k}{p})=0$, due to (\ref{62.0-15}),
there exists $n=1,2,\dots,s$ such that $m_0\big(\frac{k}{p^{s-n}}\big)=0$ which
contradicts to $\big|m_0\big(\frac{k}{p^{s-n}}\big)\big|=1$.
\end{proof}

We have investigated refinable functions whose Fourier transform
is supported in the unit disk $B_0(0)$. Such functions provide
axiom $(a)$ of Definition\ref{de1} because of a trivial argument given
in Theorem~\ref{th1-1*}.
If ${\rm supp\,}\widehat\phi\not\subset B_0(0)$, generally speaking,
the relation $\phi(\cdot-a)\in V_1$ does not follow from the
refinability of $\phi$ for all $a\in I_p$ because $I_p$ is not a group.
Nevertheless, we observed that some such refinable functions also
provide axiom $(a)$. Let $p=2$, $s=3$,  $\phi$ be defined by (\ref{62.0-8}),
where $m_0$ is given by (\ref{62.0-7-1}), $m_0(1/4)=m_0(3/8)=m_0(7/16)=m_0(15/16)=0$.
It is not difficult to see that ${\rm supp\,}\widehat\phi\subset B_1(0)$,
${\rm supp\,}\widehat\phi\not\subset B_0(0)$.
Evidently, axiom  $(a)$ will be fulfilled whenever
$\phi\Big(x-\frac{k}{4}\Big)=\sum_{r=0}^7\gamma_{kr}\phi\Big(\frac{1}{2}x-\frac{r}{8}\Big)$,
$k=1,2,3$, \ $x\in \bQ_2$, which is equivalent to
${\widehat\phi}(\xi)\chi_2\Big(\frac{k\xi}{4}\Big)
=m_k\Big(\frac{\xi}{4}\Big){\widehat\phi}(2\xi)$,
$k=1,2,3$, \ $\xi\in \bQ_2$,
where $m_k(\xi)=\frac{1}{2}\sum\limits_{r=0}^7\gamma_{k,r}\chi_2(r\xi)$.
Combining this with (\ref{62.0-6}) we have
$\widehat\phi(8\xi)(m_0(\xi)\chi_2(k\xi))-m_k(\xi))=0$, $k=1,2,3$,
\ $\xi\in \bQ_2$.
These equalities will be fulfilled for any $\xi\in \bQ_2$ whenever
they are fulfilled for $\xi=l/16$, $l=0,1,\dots, 15$.
%, because the left hand side is a
Desirable polynomials $m_k$, $k=1,2,3$, exist because we have
$\widehat\phi\Big(\frac{1}{2}\Big)=\widehat\phi\Big(\frac{3}{2}\Big)=
\widehat\phi\Big(\frac{5}{2}\Big)=\widehat\phi\Big(\frac{9}{2}\Big)=
\widehat\phi\Big(\frac{11}{2}\Big)=\widehat\phi\Big(\frac{13}{2}\Big)=
\widehat\phi(1)=\widehat\phi(5)=0$.
So, we succeeded with providing axiom $(a)$ of Definition\ref{de1}, but,
unfortunately, such a $\phi$ is not a scaling function generating MRA
because axiom $(e)$ is not valid.
Moreover, it is possible to show that for any refinable function whose
Fourier transform is in $B_1(0)$ but not in $B_0(0)$ the shift system
$\{\phi(x-a):a\in I_p\}$ is not orthogonal.
We suggest the following conjecture: {\em it does not exist compactly
supported refinable functions with mutually orthogonal shifts
$\{\phi(x-a):a\in I_p\}$ whose Fourier transform is not supported in $B_0(0)$}.

\section{Wavelet bases}
\label{s4}

Now we discuss how to find wavelet functions if we have already a $p$-adic
MRA generating by scaling function. Let the refinement equation for $\phi$
be (\ref{62.0-5}). We look for wavelet functions $\psi^{(\nu)}$,
$\nu=1,\dots,p-1$, in the form
$\psi^{(\nu)}(x)=\sum_{k=0}^{p^s-1}\gamma_{\nu k}\phi\Big(\frac{1}{p}x-\frac{k}{p^s}\Big)$,
where the coefficients $\gamma_{\nu k}$ are chosen such that
\begin{equation}
\label{62.0-25}
(\psi^{(\nu)}, \phi(\cdot-a))=0,
\quad
(\psi^{(\nu)}, \psi^{(\mu)}(\cdot-a))=\delta_{\nu \mu}\delta_{0 a},
\quad
\nu,\mu=1,\dots,p-1,
\end{equation}
for any $a\in I_p$.
It is clear that (\ref{62.0-25}) are fulfilled for all
$a\ne0,\frac{1}{p^{s-1}},\dots,\frac{p^{s-1}-1}{p^{s-1}}$.

Set $B=\frac{1}{\sqrt{p}}(\beta_0,\dots,\beta_{p^{s}-1})^T$,
$G_{\nu}=\frac{1}{\sqrt{p}}(\gamma_{\nu 0},\dots,\gamma_{\nu, p^{s}-1})^T$,
$\nu=1,\dots,p-1$,
$$
S=\left(
\begin{array}{ccccc}
0&0&\ldots&0&1 \\
1&0&\ldots&0&0 \\
0&1&\ldots&0&0 \\
\hdotsfor{5} \\
0&0&\ldots&1&0 \\
\end{array}
\right).
$$
To provide (\ref{62.0-25}) for $a=0,\frac{1}{p^{s-1}},\dots,\frac{p^{s-1}-1}{p^{s-1}}$
we should find vectors $G_{1},\dots,G_{p-1}$ so that the matrix
$$
U=(S^{0}B,\dots,S^{p^{s-1}-1}B,S^{0}G_{1},\dots,S^{p^{s-1}-1}G_{1},\dots,
S^{0}G_{p-1},\dots,S^{p^{s-1}-1}G_{p-1})
$$
is unitary.

\begin{example}
Let $s=1$. According to Proposition~\ref{pr1-3},
we set $m_0(0)=1$, $m_0\big(\frac{k}{p}\big)=0$, $k=1,\dots,p-1$ and find
the mask $m_0(\xi)=\frac{1}{p}\sum_{k=0}^{p-1}\chi_p(k\xi)$ (here $\beta_{k}=1$ in
(\ref{62.0-7-1}) and $B=\frac{1}{\sqrt{p}}(1,\dots,1)^T$). The corresponding
refinement equation~(\ref{62.0-5}) coincides with the ``natural'' refinement
equation (\ref{62.0-3}), and its solution $\Omega\big(|\cdot|_p\big)$ is a refinable
function generating a MRA because of  Theorems~\ref{th1-1*}-\ref{th1-2*}.
To find wavelets we observe that the unitary matrix
$\{\frac{1}{\sqrt p}e^{2\pi ikl}\}_{k,l=0,\dots,p-1}$ may be taken as $U^T$.
Computing the wavelet functions corresponding to this matrix $U$, we derive
the formulas which were found in~\cite{Koz0}:
$\psi^{(\nu)}(x)=\chi_p\big(\frac{\nu}{p}x\big)\Omega\big(|x|_p\big)$,
$\nu=1,\dots,p-1$.
\end{example}

\begin{example}
 Let $s=2$, $p=3$. According to Proposition~\ref{pr1-3},
we set $m_0\big(\frac{k}{9}\big)=0$ if $k$ is not divisible by $3$,
and $m_0(0)=1$, $m_0\big(\frac{1}{3}\big)=m_0\big(\frac{2}{3}\big)=-1$.
In this case  $m_0(z)=3^{-2}\big(-1+2z+2z^2-z^3+2z^4+2z^5-z^6+2z^7+2z^8\big)$,
$z=e^{2\pi i\xi}$ and
$$
{\widehat\phi}(\xi)=\left\{
\begin{array}{rll}
1,  && |\xi|_p\le \frac{1}{3}, \smallskip \\
-1, && |\xi-1|_p\le \frac{1}{3}, \smallskip \\
-1, && |\xi-2|_p\le \frac{1}{3}, \\
0, && |\xi|_p\ge 3. \\
\end{array}
\right.
$$
Thus, the corresponding refinement equation (\ref{62.0-5}) is
$\phi(x)=\sum_{k=0}^{8}\beta_{k}\phi\big(\frac{x}{3}-\frac{k}{9}\big)$,
where $\beta_{0}=-\frac{1}{3}$, $\beta_{1}=\beta_{2}=\frac{2}{3}$,
$\beta_{3}=-\frac{1}{3}$, $\beta_{4}=\beta_{5}=\frac{2}{3}$,
$\beta_{6}=-\frac{1}{3}$, $\beta_{7}=\beta_{8}=\frac{2}{3}$, and
\begin{equation}
\label{70.1}
\phi(x)=\left\{
\begin{array}{rll}
-\frac{1}{3}, && |x|_p\le \frac{1}{3} \smallskip \\
\frac{2}{3},  && |x-\frac{1}{3}|_p\le 1 \smallskip \\
\frac{2}{3},  && |x-\frac{2}{3}|_p\le 1 \smallskip \\
0, && |x|_p\ge 9 \\
\end{array}
\right.
=\frac{1}{3}\Omega(|3x|_p)\big(1-e^{2\pi i\{x\}_3}-e^{4\pi i\{x\}_3}\big).
\end{equation}

So, we have $B=\frac{1}{3\sqrt{3}}(-1,2,2,-1,2,2,-1,2,2)^T$, and the vectors
$B,SB, S^2B$ are  orthonormal. We need to extend these three columns to
a unitary matrix $U=(B,SB, S^2B, G_1,SG_1, S^2G_1, G_2,SG_2, S^2G_2)$.
It is not difficult to see that the vectors
$G_{1}=\frac{1}{\sqrt{3}}(1,0,0,-1,0,0,0,0,0)^T$,
$G_{2}=\frac{1}{\sqrt{3}}(1,0,0,1,0,0,-2,0,0)^T$
are appropriate.
The corresponding wavelet functions are  $\psi^{(1)}=\sqrt{\frac{3}{2}}
\big(\phi\big(\frac{x}{3}\big)-\phi\big(\frac{x}{3}-\frac{1}{3}\big)\big)$,
 $\psi^{(2)}=\frac{1}{\sqrt{2}}
\big(\phi\big(\frac{x}{3}\big)+\phi\big(\frac{x}{3}-\frac{1}{3}\big)
-2\phi\big(\frac{x}{3}-\frac{2}{3}\big)\big)$.
Substituting (\ref{70.1}), we obtain
$$
\psi^{(1)}(x)=\left\{
\begin{array}{rll}
-\sqrt{\frac{3}{2}}, && |x|_p\le \frac{1}{3}, \smallskip \\
\sqrt{\frac{3}{2}},  && |x-1|_p\le \frac{1}{3}, \smallskip \\
0, && |x-2|_p\le \frac{1}{3}, \\
0, && |x|_p\ge 3; \\
\end{array}
\right.
\,
\psi^{(2)}(x)=\left\{
\begin{array}{rll}
-\frac{1}{\sqrt{2}}, && |x|_p\le \frac{1}{3}, \smallskip \\
-\frac{1}{\sqrt{2}}, && |x-1|_p\le \frac{1}{3}, \smallskip \\
\sqrt{2}, && |x-2|_p\le \frac{1}{3}, \\
0, && |x|_p\ge 3. \\
\end{array}
\right.
$$
\end{example}

\bibliographystyle{amsplain}

\end{document}